\documentclass[12pt]{amsart}

\usepackage[dvipdfmx]{graphics}
\usepackage{here}
\usepackage{tikz}
\usetikzlibrary{arrows.meta}

\usepackage{amsmath, amssymb}
\usepackage{pifont}
\usepackage{booktabs}

\numberwithin{equation}{section}
\newtheorem{theorem}{Theorem}[section]  
\newtheorem{theorem?}{``Theorem''}[section]

\newtheorem{lemma}[theorem]{Lemma}

\theoremstyle{definition}

\newtheorem{question}[theorem]{Question}

\theoremstyle{remark}
\newtheorem{remark}[theorem]{Remark}

\newcommand{\R}{{\mathbb R}}
\newcommand{\C}{{\mathbb C}}

\newcommand{\N}{{\mathbb N}}
\newcommand{\Z}{{\mathbb Z}}

\begin{document}
\title[Local zeta functions]
{
Analytic continuation of 
local zeta functions to \\ 
a slit complex plane: 
the one-dimensional case
%On local zeta functions 
%attached to flat functions
} 
\author{Joe Kamimoto, Hiromichi Mizuno
and Toshihiro Nose}

\address{Faculty of Mathematics, Kyushu University, 
Motooka 744, Nishi-ku, Fukuoka, 819-0395, Japan} 
\email{joe@math.kyushu-u.ac.jp}

\address{Faculty of Science and Engineering, 
Kyushu Sangyo University
3-1 Matsukadai 2-chome, Higashi-ku,
Fukuoka, 813-8503, Japan}
\email{h.mizuno@ip.kyusan-u.ac.jp}

\address{Faculty of Engineering, 
Fukuoka Institute of Technology,
Wajiro-higashi 3-30-1, Higashik-ku,
Fukuoka, 811-0295, Japan}
\email{nose@fit.ac.jp}

%%%
\keywords{
local zeta functions, flat functions, analytic continuation}
\subjclass[2020]{11S40 (44A15).}

%\date{}

%\vspace{9 em}

\begin{abstract}
In this paper, we provide a sufficient condition 
under which the one-dimensional local zeta function 
admits a holomorphic continuation to the complex plane 
cut along the negative real axis.
\end{abstract}

\maketitle

%\clearpage

%\setcounter{tocdepth}{1}
%\tableofcontents

%%%%%%%%%%%%%%%%%%%%%%%
%\setcounter{section}{-1}
%%%%%%%%%%%%%%%%%%%%%%%%%%%%
%%%%%%%%%%%%%%%%%%%%%%%%%%%
\section{Introduction}
%%%%%%%%%%%%%%%%%%%%%%%%%%
%%%%%%%%%%%%%%%%%%%%%%%%%%%
Let us
consider the integrals of the 
following form:
\begin{equation}\label{eqn:1.1}
Z_{\phi}(s;f)=\int_{\R^n} 
|f(x)|^s \phi(x) dx
\end{equation}
for $s\in\C$, 
where $f$ and $\phi$ are 
real-valued continuous functions defined 
on an open neighborhood $U$ of the origin in 
$\R^n$ and 
the support of $\phi$ is contained 
in $U$. 
Since the above integral 
converges locally uniformly on the half-plane
${\rm Re}(s)>-c_0(f)$, where
\begin{equation}
c_0(f)=\sup\{
c\geq 0:|f|^{-c}
\mbox{ in $L^1$ on a neighborhood 
of the origin in } \R^n\},
\end{equation}
$Z_{\phi}(s;f)$ can be regarded 
as a holomorphic function there, 
which is called 
{\it local zeta function} attached to $(f,\phi)$. 
Note that the above $c_0(f)$ 
is a famous quantity called as
{\it log canonical threshold}
or 
{\it extendibility exponent} of $f$. 
More precisely, 
it is known (cf. \cite{KaN19}) that
if $\phi(0)>0$ and $\phi(x)\geq 0$ on $U$, 
then $Z_{\phi}(s;f)$ 
can not be holomorphically 
extended in any small neighborhood 
of $s=-c_0(f)$; in other words, 
$Z_{\phi}(s;f)$
always has a singularity at 
$s=-c_0(f)$. 
It is an interesting issue to investigate
the situation of analytic continuation of 
local zeta functions 
to a wider region and 
there have been many important 
investigation of this issue.

In this paper, we assume that $f(0)=0$.
Otherwise, $Z_{\phi}(s;f)$ is an entire function 
and $c_0(f)=\infty$.
We note that previous studies \cite{CKN13, KaN16tams} 
have shown that, 
under the condition $\phi(0)=0$, 
the singularities of $Z_{\phi}(s;f)$ may shift 
to the left in the complex plane.

In the case where $f$ is real analytic and 
$\phi$ is $C^{\infty}$-smooth, 
it is shown in \cite{BeG69, Ati70} that 
local zeta functions admit a meromorphic continuation 
to the whole complex plane. 
In particular, if $\phi(0)>0$ and $\phi(x)\ge 0$ on $U$, 
then $Z_{\phi}(s;f)$ has a pole at $s=-c_0(f)$ and, 
more precisely, all the poles lie on the negative real axis. 
This result can be proved by applying 
Hironaka's theorem \cite{Hir64}
on resolution of singularities.

On the other hand, 
the purely smooth case cannot 
be treated in the same way, 
since Hironaka's theorem belongs to the real analytic setting 
and is therefore inapplicable in general. 
In \cite{KaN16jmsut}, 
a certain class of $C^{\infty}$ functions, 
including Denjoy-Carleman quasianalytic classes,  
%including the non-flat function of one variable, 
is introduced, 
and for this class the corresponding local zeta functions 
admit a meromorphic continuation analogous 
to that in the real analytic case.

However, 
it is shown in \cite{KaN19, KaM24}  
that there exist specific $C^{\infty}$ 
functions with a flat term 
for which the associated local zeta functions 
possess a non-polar singularity
at $s=-c_0(f)$ (see also \cite{Nos25}).  
When $f$ contains non-zero flat terms, 
it has been shown in 
\cite{Gre06, KaN19, KaN20, Kam24} 
that local zeta functions 
can be holomorphically continued 
to the half-plane ${\rm Re}(s)>-c_0(f)$ in 
many cases.
These results indicate that 
analytic continuation beyond the line 
${\rm Re}(s)=-c_0(f)$ is delicate, 
and the following question naturally arises.

%%%
\begin{question}
Under what conditions can the integral $Z_{\phi}(s;f)$
be holomorphically continued to the complex plane cut along 
$(-\infty,-c_0(f)]$?
\end{question}
%%%

To the best of our knowledge, 
there is no example of continuous functions 
$f$ and $\phi$ in which 
$Z_{\phi}(s;f)$ does not satisfy the above property.
Our main result offers an answer 
to the above question in the one-dimensional case.

%%%%%%%%%%%%%%%%%%%%%%%%%%%%%%%%%%%%%
\begin{theorem}\label{thm:1.2}
Let $n=1$ in \eqref{eqn:1.1}
and let $\delta>0$.
Suppose that $f$ satisfies 
the conditions
\begin{enumerate}
\item 
$f$ is $C^1$ on 
$(-\delta,\delta)\setminus\{0\}$ and 
$f'(x)\neq 0$ for all $x\neq 0$; 
\item
the limits
$
\lim_{x\to +0}f(x)/f'(x)
$
and
$
\lim_{x\to -0}f(x)/f'(x)
$
exist.
\end{enumerate}
%and that $\phi$ is continuous on $(-\delta,\delta)$. 
If the support of $\phi$ is sufficiently small,
then $Z_{\phi}(s;f)$ 
admits a holomorphic continuation
to the complex plane cut along
%can be holomorphically continued 
%to the complex plane  cut along 
$(-\infty,-c_0(f)]$.
\end{theorem}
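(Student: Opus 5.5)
The plan is to reduce the problem to the behaviour of a one-variable Laplace transform on each side of the origin. Write $Z_\phi(s;f)=Z_+(s)+Z_-(s)$, where $Z_\pm$ integrate over $\pm x>0$; by symmetry it suffices to treat
$$Z_+(s)=\int_0^\delta |f(x)|^s\phi(x)\,dx .$$
The support of $\phi$ will be taken inside $(-\delta,\delta)$.

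\emph{Step 1: change of variables.} By condition (1), $f$ is strictly monotone on $(0,\delta)$. Since $f(0)=0$, the function $h(x)=-\log|f(x)|$ is a $C^1$ decreasing bijection from $(0,\delta)$ onto $(t_0,\infty)$. Let $x(t)$ denote its inverse. Then
$$Z_+(s)=\int_{t_0}^\infty e^{-st}F(t)\,dt,\qquad F(t)=\frac{\phi(x(t))}{|h'(x(t))|}=\phi(x(t))\left|\frac{f}{f'}\right|(x(t)).$$
So $Z_+$ is a Laplace transform, and $c_0(f)$ is exactly the exponential decay rate of $|x'(t)|=|f/f'|(x(t))$.

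\emph{Step 2: what condition (2) gives.} The limit $L=\lim_{x\to+0}f/f'$ must be $0$. Indeed, if $L\neq0$, then $h'=-f'/f$ would stay bounded near $0$, so $h$ would stay bounded, contradicting $h\to\infty$. Hence $|h'(x)|\to\infty$, that is, $|x'(t)|\to0$ as $t\to\infty$, and the weight $|f/f'|$ behaves like a regularly varying function of $x$ near $0$. I would use this to write
$$|x'(t)|=\exp\Bigl(-c_0 t-\int_{t_0}^t\varepsilon(\tau)\,d\tau\Bigr),$$
where $\varepsilon$ is continuous and has a one-sided sign behaviour forced by the defining supremum for $c_0(f)$. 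The aim is then to express $e^{c_0t}|x'(t)|$ as a superposition $\int e^{-\lambda t}\,d\mu(\lambda)$ of exponentials with $\lambda\geq0$. Substituting this into $Z_+$ gives
$$Z_+(s)=\int\frac{d\mu(\lambda)}{s+c_0+\lambda}$$
in the model case. This Stieltjes-type transform is automatically holomorphic off $(-\infty,-c_0]$.

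\emph{Step 3: the main obstacle, which is the role of $\phi$.} The factor $\phi(x(t))$ is the main obstacle. With $\phi$ merely continuous the statement appears to fail. For example, take $f(x)=x$ and $\phi(x)=\chi(x)\cos(\log|x|)$ with $\chi$ a continuous cutoff equal to $1$ near $0$. Near $0$ one then has $Z_+(s)=\int_0^a x^s\cos(\log x)\,dx+(\text{entire})$, and this has poles at $s=-1\pm i$.

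I would therefore first check whether additional regularity of $\phi$ is implicitly intended, for instance $\phi\in C^\infty$ or $\phi$ constant near $0$. If it is, I would expand $\phi(x)=\sum_{k<N}a_kx^k+O(x^N)$. Each monomial term is handled by Step 2 applied to $|f|^s x^k$, with the extra factor $x(t)^k$ absorbed into the weight and keeping the completely-monotone-type structure. The remainder term is holomorphic on a half-plane that moves left as $N$ grows, because the small support makes $x(t)^N$ decay as fast as needed.

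Proving the superposition representation in Step 2 from conditions (1)–(2) alone is the step I expect to be hardest. It may require replacing exact complete monotonicity by a contour-rotation argument: view $t\mapsto F(t)$ as extending analytically into sectors, and rotate the Laplace integration ray by angles up to $\pm\pi/2$, so that holomorphy covers every direction except the negative real axis.
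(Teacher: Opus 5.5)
Your Step 1 is the paper's reduction: the substitution $u=f(x)$ in the proof of Theorem 3.1, followed by the exponential substitution in the proof of Lemma 2.2. Your $F(t)$ is the paper's $\tilde\phi(e^{-t})$, and your observation that the limit in (2) must be $0$ is correct.

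The genuine gap is Step 2. Hypotheses (1)--(2) say only that $t\mapsto|x'(t)|=(f/f')(x(t))$ is positive, continuous and tends to $0$. The supremum defining $c_0(f)$ gives only integrability of $e^{ct}|x'(t)|$ for $c<c_0(f)$. Nothing forces a sign on $\varepsilon$, a representation $\int e^{-\lambda t}\,d\mu(\lambda)$, or an analytic extension of $F$ into sectors.

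In fact no argument can close this step, because the conclusion already fails in your model case $\phi\equiv1$ near $0$. Every positive continuous integrable $w$ with $w(t)\to0$ occurs as $|x'(t)|$: put $x(t)=\int_t^\infty w$ and $f(x)=e^{-t(x)}$ for $x>0$, then extend $f$ evenly, so that $f/f'=w(t(x))\to0$. With $w(t)=t^{-2}e^{-\sqrt t}(2+\cos t)$ one gets $c_0(f)=0$ and
\[
Z_+(s)=2H(s)+\tfrac12H(s-i)+\tfrac12H(s+i)+(\text{entire}),\qquad H(\sigma)=\int_T^\infty e^{-\sigma t}t^{-2}e^{-\sqrt t}\,dt .
\]
Rotating the ray shows that $H$ is holomorphic on $\C\setminus(-\infty,0]$. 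However, $\sigma=0$ is a singular point: $H(1-\zeta)=\sum_k c_k\zeta^k$ with $c_k>0$, and $\sum_k c_k(1+\eta)^k=\int_T^\infty e^{\eta t}t^{-2}e^{-\sqrt t}\,dt=\infty$ for $\eta>0$. So the radius of convergence is $1$ and Pringsheim's theorem applies. Hence $Z_+$ is singular at $s=\pm i$, off the cut $(-\infty,0]$, even though $\phi$ is constant near $0$. Neither of your Step 3 fallbacks (smooth or locally constant $\phi$) rescues the statement.

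The paper does not supply the missing step either. After the same reduction it invokes Theorem 2.2 via Lemma 2.2, where the Weierstrass polynomial $P$ is chosen depending on $k$. The real-analytic bound on $G^{(k)}(s_0)/k!$ involves $M=\max_C|P(e^{-z})|$, i.e.\ values of $P$ on the complex spiral $e^{-C_3}$, which uniform approximation on $[0,R]$ does not control. So $\limsup_k|G^{(k)}(s_0)/k!|^{1/k}\le 1/|s_0|$ is unjustified once $G$ varies with $k$.

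Your instinct in Step 3 is right, but your example does not qualify: $\cos(\log|x|)$ has no limit at $0$, so that $\phi$ is not continuous. Use instead $\phi(x)=\chi(x)|x|^{\epsilon}\cos(\log|x|)$ with $\epsilon>0$. It is continuous, $f(x)=x$ satisfies (1)--(2) with $c_0(f)=1$, and $Z_\phi(s;f)$ has poles at $s=-1-\epsilon\pm i$. Equivalently, $\psi(x)=\chi(x)x^{\epsilon}\cos(\log x)$ contradicts Theorem 2.2 with $a=0$, since $\mathcal M[\psi]$ has poles at $-\epsilon\pm i$.

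Your Taylor-remainder argument for smooth $\phi$ also presupposes that $x(t)$ decays exponentially in $t$. That fails for flat $f$, e.g.\ $x(t)=1/t$ for $f=e^{-1/|x|}$.

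So the statement is false as written. A correct version needs extra structure of the kind your contour-rotation remark presupposes, for instance holomorphy of $e^{c_0t}F(t)$ on sectors $|\arg(t-t_1)|<\pi/2$ with subexponential bounds. Hypotheses (1)--(2) cannot provide this.
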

%%%%%%%%%%%%%%%%%%%%%%%%%%%

Compared with earlier works, 
our result has two main novelties.
First, 
while previous results require 
at least $C^{\infty}$-smoothness 
of both $f$ and $\phi$, 
our theorem assumes only very weak regularity.
%%%
Second, 
even in the case where $f$ and $\phi$ 
are $C^{\infty}$-smooth, 
our theorem applies to situations 
in which $f$ is flat at the origin, 
in the sense that $f^{(k)}(0)=0$ 
for all $k\in\N$.
Note that when $f$ is not flat at the origin, 
not only does $f$ satisfy conditions (i) and (ii), 
but also 
the holomorphic continuation asserted 
in the theorem can be obtained directly.
This turns out to be a delicate issue, and 
it will be discussed in detail in the final section.

This paper is organized as follows.
In Section 2, 
we study the analytic continuation 
of the Mellin transform of continuous functions.
In Section 3, 
we prove the main results.
In Section 4, 
we discuss the assumptions of the main theorem.

\medskip

{\it Notations.}
\begin{itemize}
\item 
%We denote by $\N$, $\R$, $\C$ the set consisting of 
%all natural numbers, real numbers, 
%complex numbers, respectively. Moreover, 
We denote by $\Z_+$, $\R_+$, $\R_{>0}$
the set consisting of all nonnegative integers, 
all nonnegative real numbers, all positive real numbers,  
respectively. 
\item For a $C^{\infty}$ function $f$ and $k\in\N$, 
$f^{(k)}$ is the $k$-th derivative of $f$.
\end{itemize}

%%%%%%%%%%%%%%%%%%%%%%%%%%%

\section{The Mellin transform of continuous functions}

Let $\psi$ be a continuous function 
defined on $(0,\infty)$ whose support is contained 
in $(0,R)$ for some $R \in (0,1)$.
The Mellin transform of $\psi$ is defined by
\begin{equation}\label{eqn:2.1}
{\mathcal M}[\psi](s)
%=F_{\psi}(s)
=\int_0^{\infty} x^{s-1}\psi(x)\,dx
\end{equation}
for $s \in \C$. 
Let $a\in\R$. 
If $\psi(x)=O(x^{a})$ as 
$x\to +0$, then 
the integral in (\ref{eqn:2.1}) converges locally uniformly
on the half-plane ${\rm Re}(s)>-a$, 
which implies that 
${\mathcal M}[\psi](s)$ can be 
regarded as a holomorphic function 
on this domain. 
In the case of the Mellin transform, 
it is also an interesting issue to determine
how wide it can be holomorphically continued
and the following question naturally arises. 
%%%
\begin{question}
Under what conditions can  
${\mathcal M}[\psi](s)$
be holomorphically continued to the complex plane cut along 
$(-\infty,-a]$?
\end{question}
%%%
In the case where $\psi$ is expressed as 
$\psi(x)=x^a u(x)$ where $u(x)$ is a 
$C^{\infty}$ function on 
$(-\epsilon, \infty)$ with some $\epsilon>0$, 
${\mathcal M}[\psi](s)$ always admits a
holomorphic continuation as in Question~2.1.
Indeed, 
since integration by parts yields the following 
equation in the region ${\rm Re}(s)>0$:
\begin{equation}\label{eqn:2.2}
{\mathcal M}[\psi](s)
%=F_{\psi}(s)
=\frac{(-1)^k}{(s+a)(s+a+1)\cdots(s+a+k)}
\int_0^{\infty} x^{s+a+k} u^{(k+1)}(x)\,dx, 
\end{equation}
%where $\psi^{(k)}$ is the $k$-derivative of $\psi$. 
the convergence of the integral 
in (\ref{eqn:2.2}) implies 
that 
the integral can be holomorphically 
continued to the region ${\rm Re}(s)>-a-k-1$
for any $k\in\N$ and, 
moreover, letting $k\to \infty$ in (\ref{eqn:2.2}), 
we see that 
${\mathcal M}[\psi](s)$ can be meromorphically 
continued to the whole complex plane, and 
its poles are contained in the set 
$\{-a-k:k\in\N\}$.  

The following theorem provides an answer 
to Question~2.1 in the more general case. 

%%%%%%%%%%%%%%%%%%%%%%%%%%%%%%%%%%%%%%%
\begin{theorem}
If the limit $\lim_{x\to+0}\psi(x)/x^a$ exists,  
then the Mellin transform 
${\mathcal M}[\psi](s)$ can be holomorphically 
continued to 
the complex plane cut along 
$(-\infty,-a]$. 
\end{theorem}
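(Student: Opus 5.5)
The plan is to separate off the leading homogeneous term and then study the remainder through a Laplace-transform reformulation. Set $c=\lim_{x\to+0}\psi(x)/x^a$ and fix a cut-off $\chi\in C^\infty_0([0,R'))$ with $R<R'<1$ and $\chi\equiv1$ near $[0,R]$. Then write
\[
\psi(x)=c\,x^a\chi(x)+r(x),\qquad r(x)=o(x^a)\ (x\to+0),
\]
where $r$ is continuous and compactly supported in $[0,R')$. The first term is covered by the smooth case discussed before Question~2.1: formula \eqref{eqn:2.2} with $u=c\chi$ shows that ${\mathcal M}[c\,x^a\chi](s)$ is meromorphic on $\C$. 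Since $\chi'$ vanishes near $0$, the one-step version of \eqref{eqn:2.2} reads $-\frac{c}{s+a}\int_0^\infty x^{s+a}\chi'(x)\,dx$, whose integral is entire. So this term has at most a simple pole at $s=-a$, and in particular it is holomorphic on $\C\setminus(-\infty,-a]$. Everything therefore reduces to the remainder ${\mathcal M}[r]$.

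For the remainder I will substitute $x=e^{-t}$ and put $w=s+a$. This gives
\[
{\mathcal M}[r](s)=\int_{T}^{\infty}e^{-tw}\,q(t)\,dt,\qquad q(t)=e^{at}r(e^{-t}),\quad T=-\log R'>0,
\]
where $q$ is continuous, bounded, and satisfies $q(t)\to0$ as $t\to\infty$. The right-hand side is holomorphic for ${\rm Re}(w)>0$. The goal is then to continue this Laplace transform of a null function to $\C\setminus(-\infty,0]$. The continuation across the imaginary axis away from $w=0$ is the real content of the argument.

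My first attempt at crossing the imaginary axis will be to rotate the direction of integration. For $w$ with ${\rm Im}(w)\neq0$, I will replace the ray $[T,\infty)$ by a ray $T+e^{i\theta}\R_+$ with $\theta$ chosen so that $e^{-tw}$ decays along it. This needs $q$ to continue into a sector, which continuity alone does not give. So I will instead try an averaging device. I will write $q=q_1+q_2$, where $q_1(t)=\frac1h\int_t^{t+h}q$ is $C^1$ with $q_1'=(q(t+h)-q(t))/h\to0$, and iterate the decomposition. Integrating by parts in $t$ produces factors $1/w$ together with Laplace transforms of differences of $q$. Along any fixed non-real direction these differences are small, which gives locally uniform convergence of an Abel-summed representation on the sets $\{|\arg w|<\pi-\varepsilon\}$.

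The main obstacle is exactly this step. Bare $o(1)$ decay of $q$ gives no gain under integration by parts, because integrating $o(x^a)/x$ again produces only $o(x^a)$. So the continuation must come from the hypothesis that a genuine limit exists, that is, from the tail $q(t)-0$ having no oscillatory part with a fixed frequency. I will check this with model cases before committing to the general argument. For $q(t)=1/t$ the transform is $E_1(Tw)$, which does continue to the slit plane with a logarithmic branch point at $w=0$. I will also test a lacunary-type tail with decaying amplitudes, to see whether a natural boundary on $i\R$ can arise. If it can, the plan is to identify which extra regularity, implicit in the statement of the theorem or in the way it is later applied to $f/f'$ in Theorem~\ref{thm:1.2}, is actually used.
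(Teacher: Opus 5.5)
Your opening reduction is sound: splitting off $c\,x^a\chi$ costs at most a simple pole at $s=-a$, and after $x=e^{-t}$, $w=s+a$ you are left with the Laplace transform of a continuous $q$ with $q(t)\to0$. The decisive step, continuing $\int_T^\infty e^{-tw}q(t)\,dt$ across $i\R\setminus\{0\}$, is never proved. The averaging/Abel-summation device is only asserted. The heuristic behind it, that existence of the limit rules out ``an oscillatory part with a fixed frequency'', is false: the hypothesis gives only $q(t)\to0$, and fixed-frequency oscillations with decaying amplitude are allowed. In fact no argument can close this gap, because the statement as written is false. Modulate your own model case. Take $a=0$, $\lambda>0$, a cut-off $\chi$ supported in $[0,R)$ with $\chi\equiv1$ on $(0,e^{-T}]$, and $\psi(x)=\chi(x)\cos(\lambda\log x)/\log(1/x)$. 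Then $\psi$ is continuous and $\psi(x)\to0$ as $x\to+0$, and for ${\rm Re}(s)>0$
\[
{\mathcal M}[\psi](s)=\tfrac12\bigl[E_1(T(s-i\lambda))+E_1(T(s+i\lambda))\bigr]+(\text{entire function}).
\]
Since $E_1(z)=-\gamma-\log z+(\text{entire})$, ${\mathcal M}[\psi](s)$ is unbounded as $s\to i\lambda$ within the right half-plane. So it has a logarithmic branch point at $i\lambda\notin(-\infty,0]$ and no holomorphic continuation to $\C\setminus(-\infty,0]$. With $f(x)=x$ and $\phi(x)=\psi(|x|)$ the same example violates Theorem~1.2 at $s=-1\pm i\lambda$, so no condition on $f/f'$ can rescue it.

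The paper takes a different route. It proves $\limsup_k|F^{(k)}(s_0)/k!|^{1/k}\le1/|s_0|$ via Cauchy--Hadamard. It does this first for $\psi$ real analytic at $0$, by rotating the tail of the $y$-integral onto the ray $\arg z=-\arg s_0$; this is your ray rotation, in a setting where $\psi(e^{-z})$ does extend. It then passes to continuous $\psi$ by Weierstrass approximation. The example refutes that lemma too: at $s_0=1+i\lambda$ the radius of convergence is $1<|s_0|$. The error is in the last step. The polynomial $P$ is chosen with tolerance $({\rm Re}(s_0))^{k+1}\epsilon^k$, so it depends on $k$. But the bound for $G={\mathcal M}[P]$ is a $\limsup$ over $k$ for one fixed function, and its constant $M=\max_C|P(e^{-z})|$ involves complex values of $P$ that uniform approximation on $[0,R]$ does not control. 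So your suspicion was warranted. What is missing is an extra hypothesis on $\psi$, not a cleverer summation scheme. For instance, it suffices that $e^{at}\psi(e^{-t})$ extends to a bounded holomorphic function on a half-plane ${\rm Re}(t)>T_0$. Then your rotation of $[T,\infty)$ to $T+e^{i\theta}\R_+$, $|\theta|<\pi/2$, works for every $|\arg w|<\pi$.
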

%%%%%%%%%%%%%%%%%%%%%%%%%%%%%%%%%%%%%%%
\begin{remark}
Let us consider the following three conditions:
\begin{enumerate}
\item[(A)] 
The limit $\lim_{x\to+0}\psi(x)/x^a$ exists; 
\item[(B)] 
There exists a continuous function $g$ on $[0,R)$ such that 
$\psi(x)=x^a g(x)$; 
\item[(C)]
$\psi(x)=O(x^a)$ as $x\to+0$. 
\end{enumerate}
Then it is easy to see the implications: 
(A) $\Longleftrightarrow$
(B) $\Longrightarrow$
(C).
We do not know whether the assumption 
of Theorem~2.1 can be replaced by (C).
\end{remark}

%%%%%%%%%%%%%%%%%%%%%%%%%%%%%%%%%%%%%%%
For a simplicity, we hereafter denote 
$F(s):={\mathcal M}[\psi](s)$. 
Let $s_0$ be a point in
the region ${\rm Re}(s)>-a$. 
Let us consider the Taylor series of 
$F(s)$ at $s_0$
%%%
\begin{equation}\label{eqn:2.3}
\sum_{k=0}^{\infty}
\frac{F^{(k)}(s_0)}{k!}
(s-s_0)^k.
\end{equation}
%where $F^{(k)}$ is the $k$-th derivative of $F$. 
%%%
Since $F$ is holomorphic on 
${\rm Re}(s)>-a$, 
the radius of convergence of 
the power series (\ref{eqn:2.3}) is greater than 
or equal to $a+{\rm Re}(s_0)$.
Theorem~2.2 can be easily obtained 
by using the following lemma. 

%%%%%%%%%%%%%%%%%%%%%%%%%%%%%%%%%%%%%%
\begin{lemma}\label{lem:2.2}
The radius of convergence of 
the power series (\ref{eqn:2.3}) is greater than 
or equal to $|s_0+a|$.
\end{lemma}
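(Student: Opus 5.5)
The natural route is to work directly with the Taylor coefficients of $F$ at $s_0$. Condition (A), in the equivalent form (B) of Remark~2.3, gives $\psi(x)=x^a g(x)$ with $g$ continuous on $[0,R)$. Put $w_0=s_0+a$, so that $\sigma:={\rm Re}(w_0)>0$. Substituting $x=e^{-u}$,
\[
\frac{F^{(k)}(s_0)}{k!}
=\frac{1}{k!}\int_0^R x^{w_0-1}(\log x)^k g(x)\,dx
=\frac{(-1)^k}{k!}\int_{-\log R}^{\infty} u^k e^{-w_0u}\,g(e^{-u})\,du .
\]
By Cauchy--Hadamard the lemma amounts to showing $\limsup_k |F^{(k)}(s_0)/k!|^{1/k}\le 1/|w_0|$. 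I would split $g=g(0)+h$ with $h(0)=0$ and treat the two parts separately.

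The constant part is explicit. Its contribution to $F$ is $g(0)R^{s+a}/(s+a)$, whose only singularity is at $s=-a$. The disc $|s-s_0|<|s_0+a|$ has $-a$ on its boundary, so this part has radius exactly $|w_0|$. More generally, if $g$ is a polynomial $\sum_j c_jx^j$, the contribution is $\sum_j c_jR^{s+a+j}/(s+a+j)$. Each pole $-a-j$ satisfies $|s_0+a+j|\ge|w_0|$, so again the radius is at least $|w_0|$. The plan is then to approximate $g$ uniformly on $[0,R]$ by polynomials (Weierstrass) and control the error term $h$.

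This error step is the main obstacle, and I expect it to fail as stated. The only available estimate for the error is the absolute one, $\varepsilon\int u^ke^{-\sigma u}\,du\approx\varepsilon k!/\sigma^{k+1}$. That gives radius $\sigma$, not $|w_0|$, and the smallness of $\varepsilon$ does not improve the exponential rate. Beating $\sigma$ would require cancellation from the oscillating factor $e^{-i\,{\rm Im}(w_0)u}$. Without any regularity of $h$ no such cancellation is available, since neither contour rotation nor integration by parts applies.

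In fact I believe the lemma itself is false under hypothesis (A) alone. Take $h(e^{-u})=\cos(tu)/\log u$ for large $u$ (cut off near $u=-\log R$). Then $h$ is continuous and $h(x)\to0$ as $x\to+0$, so (A) holds. Its Mellin contribution is
\[
\tfrac12\int^{\infty}\frac{e^{-(s+a\mp it)u}}{\log u}\,du .
\]
This is genuinely singular at $s=-a\pm it$, because the integral $\int^\infty du/\log u$ diverges. Choosing $s_0=-a+\sigma+it$ with $\sigma$ small, the singular point $-a+it$ lies at distance $\sigma<|s_0+a|$ from $s_0$. This contradicts the lemma, and even Theorem~2.2, since $-a\pm it$ lies off the cut $(-\infty,-a]$.

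So my first step in writing a proof would be to check this example carefully. If it survives, the statement needs an extra hypothesis. A natural candidate is a rate such as $h(x)=O(x^{\eta})$ for some $\eta>0$, or a Dini-type condition on $h$. Under $h(x)=O(x^{\eta})$ the error part is holomorphic on ${\rm Re}(s)>-a-\eta$, and iterating the Weierstrass approximation would no longer be needed.
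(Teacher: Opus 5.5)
Your conclusion is correct: under the paper's hypothesis the statement is false, and the paper's proof fails at exactly the approximation step you distrust. In the general case the authors choose, for given $\epsilon$ and $k$, a polynomial $P$ with $|\psi-P|<(\mathrm{Re}\,s_0)^{k+1}\epsilon^k$ on $[0,R]$. This $k$-dependent accuracy is what is needed to absorb the factor $k!/(\mathrm{Re}\,s_0)^{k+1}$ you computed. But it means $P=P_k$ and $G=G_k$ change with $k$. The bound $\limsup_k|G^{(k)}(s_0)/k!|^{1/k}\le 1/|s_0|$ was proved in the real-analytic case for one fixed function, yet here it is applied to a different function for each $k$. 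In that proof $|I_3(k)/k!|^{1/k}\le M^{1/k}|s_0|^{-1-1/k}$ with $M=\max_C|P_k(e^{-z})|$. Since $e^{-z}$ spirals around $0$ along $C_3$, $M=M_k$ measures $P_k$ at non-real points near $0$, and nothing forces $M_k^{1/k}\to 1$ for exponentially accurate approximants of a merely continuous function. With one fixed $P$ you only recover the trivial radius $\mathrm{Re}(s_0)+a$, as you say.

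Your example survives; the one thing to add is that the two exponentials do not cancel. Take $a=0$ and a cutoff $\chi\in C^\infty$ with $0\le\chi\le 1$, $\chi=0$ on $(-\infty,r_1]$ for some $r_1>\max\{r,e\}$, and $\chi=1$ for large $u$. Set $\psi(e^{-u})=\chi(u)\cos(tu)/\log u$ and $L(w)=\int_r^\infty e^{-wu}\chi(u)(\log u)^{-1}\,du$, so that $F(s)=\frac12\bigl(L(s-it)+L(s+it)\bigr)$. Monotone convergence gives $L(\sigma)\to+\infty$ as $\sigma\downarrow 0$. One integration by parts, $L(w)=w^{-1}\int_r^\infty e^{-wu}\bigl(\chi(u)/\log u\bigr)'\,du$ with $(\chi/\log u)'\in L^1$, gives $|L(w)|\le C/|w|$ on $\mathrm{Re}\,w>0$, so $L(\sigma+2it)$ stays bounded. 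Hence $F(it+\sigma)\to\infty$ as $\sigma\downarrow 0$. The Taylor series at $s_0=\sigma_0+it$ therefore has radius exactly $\sigma_0<|s_0|$: a larger radius would give a holomorphic function on a disc containing $it$ that agrees with $F$ on the convex set where that disc meets $\{\mathrm{Re}\,s>0\}$.

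The same mechanism refutes Theorem~2.2 and Theorem~1.2. Taking $f(x)=x$ and $\phi(x)=\cos(t\log|x|)/\log\log(1/|x|)$ near $0$ gives singularities at $-1\pm it$. Even with $C^\infty$ data the theorem fails: take $f(x)=e^{-1/|x|}$ and $\phi$ a smooth cutoff of $\cos(t/x)e^{-\epsilon/x}$ on $x>0$, extended by $0$. Then $\tilde\phi(u)=u^{\epsilon}\cos(t\log u)/\log^2(1/u)$ near $0$, and the derivative of $Z_\phi(s;f)$ is unbounded near $-\epsilon\pm it\notin(-\infty,0]$.

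Your proposed repair, however, does not restore the statement. Assuming $\psi(x)/x^a=g(0)+O(x^\eta)$ makes the error term holomorphic only on $\mathrm{Re}\,s>-a-\eta$, which gives radius at least $\min\{|s_0+a|,\ \mathrm{Re}(s_0)+a+\eta\}$ and no more. The shifted example $\psi(x)=x^{a+\eta}\cos(t\log x)/\log\log(1/x)$ (cut off) satisfies this hypothesis yet is singular at $-a-\eta\pm it$. A Dini condition fails in the same way: use $1/\log^2(1/x)$ in place of $1/\log\log(1/x)$. The obstruction is oscillation on the $\log x$ scale, which translates singularities vertically, so you need a hypothesis that excludes it. One option is $g\in C^\infty([0,R))$, where \eqref{eqn:2.2} applies. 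Another is holomorphy of $u\mapsto g(e^{-u})$, for large $|u|$, in every sector $|\arg u|<\beta<\pi/2$ with subexponential growth; under that hypothesis the paper's contour argument goes through. The function $1/\log u$ qualifies, but $\cos(tu)/\log u$ does not, since $e^{\pm itu}$ grows exponentially off the real axis.
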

%%%%%%%%%%%%%%%%%%%%%%%%%%%%%%%%%%%%%%

\begin{proof}
It suffices to deal with 
the case where $a=0$.   
In this case, it may be assumed that 
$\psi$ is continuous on $[0,R)$ 
(see Remark~2.3). 
When $s_0$ is a positive real number, 
the assertion is trivial.
Without loss of generality, 
we assume ${\rm Im}(s_{0})>0$. 
In order to show the assertion, 
it suffices to prove the following estimate 
by the Cauchy-Hadamard theorem
%%%%%%%%%%%%%%%%%%%%%%%%%%%%
\begin{equation}\label{eqn:2.4}
\limsup_{k \to \infty}
\left| \frac{F^{(k)}(s_0)}{k!} \right|^{1/k} 
\le \frac{1}{|s_0|}.
\end{equation}
%%%%%%%%%%%%%%%%%%%%%%%%%%%%
From the Lebesgue convergence theorem,
the $k$-th derivative of $F(s)$ is expressed as
\begin{equation}\label{eqn:2.5}
F^{(k)}(s)
=\int_0^R x^{s-1}(\log x)^k\psi(x)\,dx.
\end{equation}
Changing the integral variable by $x=e^{-y}$,
we have 
\begin{equation}\label{eqn:2.6}
F^{(k)}(s)
=(-1)^k\int_r^{\infty} y^k e^{-sy}\psi(e^{-y})\,dy,
\end{equation}
where $r:=-\log R>0$.

First, we will show the estimate (\ref{eqn:2.4}) 
under the assumption that  
$\psi$ can be extended to a real analytic function 
near the origin.
Then there exist a positive real number $\delta$ 
satisfying $0<\delta<R$
and
a holomorphic function $\Psi$ defined on the disk 
$\{ z \in \C : |z|<\delta \}$
such that 
the restriction of $\Psi$ 
to $(0,\delta)$ is $\psi$.
We prepare the four integral curves as follows.
\begin{equation*}\label{eqn:}
\begin{split}
&C_1 : z=t \,\,\, 
(\rho_1 \le t \le \rho_2), \\
&C_2 : z=\rho_1 e^{i\theta} \,\,\, 
(-\theta_0 \le \theta \le 0), \\
&C_3 : z=te^{-i\theta_0} \,\,\, 
(\rho_1 \le t \le \rho_2), \\
&C_4 : z=\rho_2 e^{i\theta} \,\,\, 
(-\theta_0 \le \theta \le 0),  
\end{split}
\end{equation*}
where $\theta_0:={\rm Arg}(s_0) \in (0,\pi/2)$
and $\rho_1, \rho_2$ are positive real numbers with 
$-\log \delta/\cos \theta_0<\rho_1<\rho_2$.
Each curve admits a direction,
which can be determined as in the Figure~1.
The anti-clockwise oriented closed curve 
constructed from the oriented curves 
$C_1, C_2, C_3, C_4$ is denoted by $C$
and 
the bounded domain surrounded 
by $C$ is denoted by $D$.

%%%%%%%%%%%%%%%%%%%%%%%%%%%%%%%%%%%%%%%%%
%%%%%%%%%%%%%%%%%%%%%%%%%%%%%%%%%%%%%%%%%
%\begin{center}
%\begin{tikzpicture}%[x=0.55cm,y=0.54cm]

\begin{figure}[htbp]
\centering
\begin{tikzpicture}

\draw[thick,-stealth] (-1,0)--(5,0) node [anchor=west]{$\mathrm{Re}$};
\draw[thick,-stealth] (0,-5)--(0,1) node [anchor=south]{$\mathrm{Im}$};

\node at (1,0) [above] {$r$};
\node at (2,0) [above] {$\rho_1$};
\node at (4,0) [above] {$\rho_2$};

\filldraw[fill=black] (1,0) circle[radius=0.5mm];
\filldraw[fill=black] (2,0) circle[radius=0.5mm];
\filldraw[fill=black] (4,0) circle[radius=0.5mm];
\filldraw[fill=black] (1,{-sqrt(3)}) circle[radius=0.5mm];
\filldraw[fill=black] (2,{-2*sqrt(3)}) circle[radius=0.5mm];

\draw[dashed] (0,0) -- ({5/sqrt(3)},-5);

%%%%D%%%%%
\fill[black!13] (2,0) arc(0:-60:2) -- (1,{-sqrt(3)}) -- (2,{-2*sqrt(3)}) 
-- (2,{-2*sqrt(3)}) arc(-60:0:4) -- (2,0);

\node at (2.7,-1.5) {$D$};

%%%%C1%%%%
\draw[very thick,-stealth] (4,0)--(3,0);
\draw[very thick] (3,0)--(2,0);

%%%%C2%%%%
\draw[very thick,-stealth] (2,0) arc(0:-30:2);
\draw[very thick] ({sqrt(3)},-1) arc(-30:-60:2);

%%%%C3%%%%
\draw[very thick,-stealth] (1,{-sqrt(3)})--(3/2,{-3/2*sqrt(3)});
\draw[very thick] (3/2,{-3/2*sqrt(3)})--(2,{-2*sqrt(3)});

%%%%C4%%%%
\draw[very thick,-stealth] (2,{-2*sqrt(3)}) arc(-60:-30:4);
\draw[very thick] ({2*sqrt(3)},-2) arc(-30:0:4);

\node at (3,0) [above] {$C_1$};
\node at ({sqrt(3)},-1) [right] {$C_2$};
\node at (3/2,{-3/2*sqrt(3)}) [left] {$C_3$};
\node at ({2*sqrt(3)},-2) [right] {$C_4$};

\draw[thick] (1/2,0) arc(0:-60:1/2);
\node at ({1/4*sqrt(3)},-1/2) [right] {$-\psi$};

\end{tikzpicture}

\caption{Integral curves}
\label{fig:integral_curves}

\end{figure}

%%%%%%%%%%%%%%%%%%%%%%%%%%%%%%%%%%

%%%%%%%%%%%%%%%%%%%%%%%%%%%%%%%%%%

We define $\Phi_k(z):=z^k e^{-s_0z}\Psi(e^{-z})$.
Since $\Phi_k(z)$ is holomorphic on an open 
neighborhood of $\overline{D}$,
the Cauchy integral theorem implies
\begin{equation}\label{eqn:2.7}
\int_C \Phi_k(z)\,dz
=I_1(k)+I_2(k)+I_3(k)+I_4(k) %\sum_{j=1}^4\int_{C_j}\Phi(z)\,dz
=0,
\end{equation}
where 
$$
I_j(k):=\int_{C_j}\Phi_k(z)\,dz \quad 
\text{ for } j=1,2,3,4.
$$ 
From \eqref{eqn:2.7},
we have the following relationship concerning with 
the integral in \eqref{eqn:2.6}:
\begin{equation}\label{eqn:2.8}
\int_r^{\rho_2} 
y^k e^{-s_0y}\psi(e^{-y})\,dy
=\tilde{I}(k)+I_2(k)+I_3(k)+I_4(k),
\end{equation}
with
\begin{equation*}
\tilde{I}(k)=
\int_r^{\rho_1} y^k e^{-s_0y}\psi(e^{-y})\,dy.
\end{equation*}

{\bf (The behavior of $\tilde{I}(k)$ as $k\to\infty$)} \\
The integral $\tilde{I}(k)$ can be estimated as
\begin{equation*}
\left|\tilde{I}(k)\right|
\le M\int_0^{\rho_1} y^k e^{-{Re}(s_0)y}\,dy 
\le M\int_0^{\rho_1} y^k\,dy
=\frac{M}{k+1}\rho_1^{k+1},
\end{equation*}
where $M:=\max\{|\Psi(e^{-z})|:z\in C\}$.
Then,
we have
\begin{equation*}\label{eqn:2.}
\left|\frac{\tilde{I}(k)}{k!} \right|^{1/k} 
\le \frac{1}{(k!)^{1/k}}\frac{1}{(k+1)^{1/k}} 
M^{1/k}  {\rho_1}^{1+1/k}.
\end{equation*}
The right-hand side in the above 
inequality  tends to $0$ as $k \to \infty$,
which implies
\begin{equation}\label{eqn:2.9}
\lim_{k\to \infty}\left|\frac{\tilde{I}(k)}{k!}\right|^{1/k}
=0.
\end{equation}

{\bf (The behavior of $I_2(k)$ as $k\to\infty$)} \\
The integral $I_2(k)$ can be estimated as
\begin{equation*}
|I_2(k)|
\le M \rho_1^k
\int_{-\theta_0}^0 
e^{-|s_0|\rho_1 \cos(\theta+\theta_0)}\,d\theta
\le M \, \rho_1^k \, 
\theta_0 e^{-|s_0|\rho_1 \cos\theta_0}.
\end{equation*}
Then,
we have
\begin{equation*}
\left|\frac{I_2(k)}{k!} \right|^{1/k} 
\le \frac{1}{(k!)^{1/k}} M^{1/k} \rho_1 
\theta_0^{1/k} e^{-(|s_0|\rho_1 \cos\theta_0)/k}
\le \frac{1}{(k!)^{1/k}} M^{1/k}\rho_1 \theta_0^{1/k}.
\end{equation*}
Since the last term tends to $0$ as $k \to \infty$,
we have
\begin{equation}\label{eqn:2.10}
\lim_{k\to\infty}\left|\frac{I_2(k)}{k!}\right|^{1/k}=0.
\end{equation}

{\bf (The behavior of $I_3(k)$ as $k\to\infty$)} \\
The integral $I_3(k)$ can be estimated as
\begin{equation*}
|I_3(k)|
\le M\int_0^{\rho_2} t^k e^{-|s_0|t}\,dt.
\end{equation*}
By changing the integral variable $u=|s_0|t$,
we have
\begin{equation*}
\begin{split}
|I_3(k)|
\le &\frac{M}{|s_0|^{k+1}}
\int_0^{\rho_2 |s_0|} e^{-u} u^k \,du \\
\le &\frac{M}{|s_0|^{k+1}}
\int_0^{\infty} e^{-u} u^k \,du 
=\frac{M}{|s_0|^{k+1}}\Gamma(k+1).
\end{split}
\end{equation*}
Since $\Gamma(k+1)=k!$,
we have
\begin{equation*}
\left|\frac{I_3(k)}{k!} \right|^{1/k} 
\le \frac{M^{1/k}}{|s_0|^{1+1/k}}. 
\end{equation*}
The right-hand side in the above inequality 
tends to $1/|s_0|$ as $k \to \infty$,
which implies
\begin{equation}\label{eqn:2.11}
\limsup_{k\to \infty}
\left|\frac{I_3(k)}{k!}\right|^{1/k}\leq \frac{1}{|s_0|}.
\end{equation}

{\bf (The bahavior of $I_4(k)$ as $k\to\infty$)} \\
The integral $I_4(k)$ 
can be estimated similarly to the case of 
$I_2(k)$ as
\begin{equation*}
|I_4(k)|
\le M {\rho_2}^k \theta_0 e^{-|s_0|\rho_2 \cos\theta_0}.
\end{equation*}
Then, we have
\begin{equation}\label{eqn:2.12}
\lim_{k\to \infty}\left|\frac{I_4(k)}{k!}\right|^{1/k}
=0.
\end{equation}

From \eqref{eqn:2.6} and \eqref{eqn:2.8}, we see
\begin{equation}\label{eqn:2.13}
\left| \frac{F^{(k)}(s_0)}{k!} \right|^{1/k}
\le \left| \frac{\tilde{I}(k)}{k!} \right|^{1/k}
+\left| \frac{I_2(k)}{k!} \right|^{1/k}
+\left| \frac{I_3(k)}{k!} \right|^{1/k}
+\left| \frac{I_4(k)}{k!} \right|^{1/k}.
\end{equation}
From
\eqref{eqn:2.9}, \eqref{eqn:2.10}, 
\eqref{eqn:2.11} and \eqref{eqn:2.12},
we obtain the estimate \eqref{eqn:2.4}
under the assumption of 
the real analyticity of $\psi$ near the origin.

Next, 
let us consider the general case, that is, 
$\psi$ is only continuous on $[0,R)$.
By the Weierstrass approximation theorem,
for any $\epsilon>0$ and $k \in \N$,
there exists a polynomial $P(x)$ 
satisfying that 
$|\psi(x) - P(x)| < ({\rm Re}(s_0))^{k+1}\epsilon^k$ 
on $[0,R]$.
Let $G(s):={\mathcal M}[P](s)$. 
Then, 
we have
\begin{equation}\label{eqn:2.14}
\begin{split}
\left|F^{(k)}(s_0)\right| - \left|G^{(k)}(s_0)\right|  
\le& \int_0^R x^{{\rm Re}(s_0)-1}
\left|\psi(x) - P(x)\right|\cdot|\log x|^k\,dx \\
<& \,({\rm Re}(s_0))^{k+1}\epsilon^k
\int_0^{1} x^{{\rm Re}(s_0)-1}|\log x|^k\,dx\\
=& \,\epsilon^k\int_0^{\infty}e^{-u} u^k \,du
=\epsilon^k k!.
\end{split}
\end{equation}
We remark that
the first equality in 
\eqref{eqn:2.14} is obtained 
by the change of the variables 
$u=-({\rm Re}(s_0))\log x$.
Hence, we have 
\begin{equation*}
\begin{split}
\limsup_{k \to \infty}
\left| \frac{F^{(k)}(s_0)}{k!} \right|^{1/k} 
&\le \limsup_{k \to \infty}
\left(\left| \frac{G^{(k)}(s_0)}{k!}\right| 
+\epsilon^{k}\right)^{1/k}\\
&\le \limsup_{k \to \infty}
\left| \frac{G^{(k)}(s_0)}{k!} \right|^{1/k} 
+ \epsilon
\leq \frac{1}{|s_0|} + \epsilon.
\end{split}
\end{equation*}
As $\epsilon>0$ is taken arbitrarily, 
the estimate \eqref{eqn:2.4} holds.
\end{proof}

%%%%%%%%%%%%%%%%%%%%%%%%%%%%%%%%%%%%%%%
\section{Proof of Theorem~1.2}

We consider the integral
\begin{equation}\label{eqn:3.1}
{\mathcal Z}(s)=\int_{0}^{\infty}|f(x)|^s \phi(x)dx
\end{equation}
for $s\in\C$,  
where $f, \phi$ are as in \eqref{eqn:1.1}.
Since the integral ${\mathcal Z}(s)$ converges 
locally uniformly on the half-plane ${\rm Re}(s)>-c_{0}(f)$,
the integral ${\mathcal Z}(s)$ becomes 
a holomorphic function there.
In order to show Theorem~1.2, 
it suffices to consider the case of 
$\mathcal{Z}(s)$
%The following theorem is essential.
%%%%%%%%%%%%%%%%%%%%%%%%%%%%%
\begin{theorem}\label{thm:3.1}
Let $f$ satisfy the conditions in Theorem~1.2.
If the support of $\phi$ is sufficiently small,
then
$\mathcal{Z}(s)$ can be holomorphically continued
to the complex plane cut along $(-\infty,-c_0(f)]$.
\end{theorem}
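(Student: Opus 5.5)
The plan is to reduce $\mathcal{Z}(s)$ to a Mellin transform by the substitution $u=|f(x)|$, and then apply Theorem~2.2 with $a=0$.

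\textbf{Monotonicity and the substitution.} By condition (i), $f'$ does not vanish on $(0,\delta)$. Since $f'$ is continuous there, it has constant sign, so $f$ is strictly monotone on $(0,\delta)$. As $f$ is continuous with $f(0)=0$, the function $|f|$ is a strictly increasing homeomorphism from $(0,\delta)$ onto some $(0,\eta)$, and it is $C^1$ there. Shrinking the support of $\phi$, I may assume $\mathrm{supp}\,\phi\cap[0,\infty)\subset[0,\delta')$ with $\delta'<\delta$ and $|f(\delta')|<1$.

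Let $g:(0,\eta)\to(0,\delta)$ be the inverse of $|f|$. Then $g'(u)=1/|f'(g(u))|$. For ${\rm Re}(s)>0$ we get
\[
\mathcal{Z}(s)=\int_0^{\eta} u^{s}\,\phi(g(u))\,g'(u)\,du=\int_0^\infty u^{s-1}\psi(u)\,du=\mathcal{M}[\psi](s),
\]
where
\[
\psi(u):=\phi(g(u))\,\Bigl|\frac{f(x)}{f'(x)}\Bigr|_{x=g(u)}.
\]
The function $\psi$ is continuous on $(0,\infty)$ (extended by zero) and its support lies in $(0,R)$ with $R=|f(\delta')|<1$.

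\textbf{Checking the hypothesis of Theorem~2.2.} By condition (ii), $L=\lim_{x\to+0}f(x)/f'(x)$ exists. Since $g(u)\to 0$ as $u\to+0$, we get $\psi(u)\to\phi(0)|L|$. So the limit $\lim_{u\to+0}\psi(u)/u^0$ exists. In fact $L=0$ necessarily: otherwise $(\log|f|)'=f'/f$ would be bounded near $0$, contradicting $|f|\to0$. This is not needed, however.

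Theorem~2.2 with $a=0$ then shows that $\mathcal{M}[\psi]$, and hence $\mathcal{Z}$, extends holomorphically to $\C\setminus(-\infty,0]$.

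\textbf{Gluing the two domains.} Separately, $\mathcal{Z}$ is already holomorphic on ${\rm Re}(s)>-c_0(f)$, since the defining integral converges locally uniformly there. I would set $\Omega_1=\C\setminus(-\infty,0]$ and $\Omega_2=\{{\rm Re}(s)>-c_0(f)\}$. Their intersection is $\Omega_2\setminus(-c_0(f),0]$, which is connected. Both functions agree with the integral on ${\rm Re}(s)>0$, so by the identity theorem they coincide on $\Omega_1\cap\Omega_2$. They therefore define a single holomorphic function on $\Omega_1\cup\Omega_2=\C\setminus(-\infty,-c_0(f)]$, which proves Theorem~\ref{thm:3.1}.

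The negative half-line is handled the same way, using $x\mapsto -x$ and the left limit in (ii). This yields Theorem~1.2.

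\textbf{Main obstacle.} The delicate step is the change of variables near $u=0$. One must check that $\psi$ is genuinely continuous with a limit at $0$ using only condition (ii), without any boundedness of $f'$. One must also check that the improper integral identity is justified for ${\rm Re}(s)>0$; this follows from monotone convergence on $[\epsilon,\delta']$ as $\epsilon\to0$.
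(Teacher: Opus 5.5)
Your proposal is correct and follows essentially the same route as the paper: the substitution $u=f(x)$ (you work with $|f|$ rather than assuming $f'>0$), continuity of $u\,(f^{-1})'(u)\,\phi(f^{-1}(u))$ near $0$ from condition (ii), and then Theorem~2.2. Your one addition is to make explicit that Theorem~2.2 is applied with $a=0$, which only reaches $\C\setminus(-\infty,0]$. You then glue this, via the identity theorem, with the half-plane ${\rm Re}(s)>-c_0(f)$ to obtain $\C\setminus(-\infty,-c_0(f)]$; the paper leaves this step implicit.
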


\begin{proof}
Without loss of generality, we assume 
$f'(x)>0$ for $x>0$.
Since $f$ is monotonously increasing, 
the inverse function $f^{-1}$ exists.
Here, 
we take the support of $\phi$ to be sufficiently small such that ${\rm Supp}(\phi)\subset [-r,r]$, 
where $r>0$ satisfies $|f(r)|<1$.
By the change of variables $u=f(x)$,
the integral (\ref{eqn:3.1}) is expressed as
\begin{equation}\label{eqn:3.2}
\mathcal{Z}(s)=\int_0^{\infty} u^{s-1}\tilde{\phi}(u)\,du,
\end{equation}
where 
\begin{equation}
\tilde{\phi}(u)=u\cdot(f^{-1}(u))' 
\cdot \phi(f^{-1}(u)).
\end{equation}\label{eqn:3.3}
We remark that 
the support of $\tilde{\phi}$ is 
contained in $[0, f(r)]$. 
Since the limit of $f(x)/f'(x)$ as $x\to +0$ exists,
$u\cdot(f^{-1}(u))'$ is continuous near the origin,
so is $\tilde{\phi}(u)$. 
Therefore, Theorem~2.2 implies that 
$\mathcal{Z}(s)$ can be holomorphically extended to
the complex plane cut along $(-\infty,-c_0(f)]$.
\end{proof}

Let us prove Theorem~1.2.
We decompose the integral $Z_{\phi}(s;f)$ as
\begin{equation}\label{eqn:3.4}
Z_{\phi}(s;f)=
\int_{0}^{\infty}|f(x)|^s \phi(x)dx+
\int_{0}^{\infty}|f(-x)|^s \phi(-x)dx.
\end{equation}
Applying Theorem~\ref{thm:3.1} 
to the two integrals in (\ref{eqn:3.4}),
we obtain the assertion.

%%%%%%%%%%%%%%%%%%%%%%%%%%%%%%%%%%

\section{Remarks on the assumption
of $f$ in Theorem~1.2}

In this section, 
we always assume that
$f$ is $C^{1}$ 
on $(-\delta, \delta)\setminus\{0\}$ 
with $f(0)=0$, and the support of 
$\phi$ is contained in $(-\delta, \delta)$, 
where $\delta$ is a positive constant. 

First, let us consider the case where 
$f$ is $C^{\infty}$-smooth. 
When
$f$ is not flat at the origin, 
there exist $m\in\N$ and 
$u\in C^{\infty}(-\delta,\delta)$
with $u(0)\neq 0$ such that $f(x)=x^m u(x)$ on 
$(-\delta, \delta)$. 
It is easy to check that this $f$ 
satisfies the conditions in Theorem~1.2.
In fact, we have 
\begin{equation*}
\frac{f(x)}{f'(x)}=
\frac{x}{m u(x)+x u'(x)} \longrightarrow 0 
\quad \text{ as } x\to 0.
\end{equation*}
Moreover, when $\phi$ is $C^{\infty}$-smooth, 
the assertion of the theorem can be shown directly.
Indeed, by a change of variables, 
$f$ can be regarded as a monomial 
$\pm x^m$, which implies that 
$Z_{\phi}(s;f)$ 
admits a meromorphic continuation 
to the whole complex plane and 
that its poles are contained 
on the negative real axis, 
by using \eqref{eqn:2.2}.
%(See Lemma 4.1 and Remark 4.2 of \cite{KaN20}.)
On the other hand, 
even in the case where $f$ is flat at the origin, 
there are many examples in which 
$Z_{\phi}(s;f)$ admits the analytic continuation 
to $\C\setminus(-\infty,0]$. 
The function
\begin{equation}\label{eqn:4.1}
f(x)= e^{-1/|x|^{\alpha}} \text{ for }
x\in\R\setminus\{0\}; \; f(0)=0,
\end{equation}
with $\alpha>0$, 
is a typical example. 
In fact, 
we have 
\begin{equation*}
\frac{f(x)}{f'(x)}=
\frac{1}{\alpha}\, |x|^{\alpha+1} \longrightarrow 0 
\quad \text{ as } x\to 0.
\end{equation*}

Next, 
let us consider the more general cases. 
In order to understand the features 
of the conditions (i) and (ii) in Theorem~1.2, 
we will consider the differences 
among the following four conditions.
%%%
\begin{enumerate}
\item[(A)] 
$f'(x)>0$ for $x\in(0,\delta)$;
\item[(B)] 
$f'(x)>0$ for $x\in(0,\delta)$ and the limit 
$\lim_{x\to 0} f(x)/f'(x)$ exists; 
%(the assumption of Theorem~1.1);
\item[(C)] 
$f\in C^2((0,\delta))$, 
$f'(x)>0$ and $f''(x)\geq 0$ for $x\in(0,\delta)$;
\item[(D)]  
$f\in C^2((0,\delta))$, 
$f''(x)>0$ for $x\in(0,\delta)$.
\end{enumerate}
It is not difficult to see the implications:
\begin{equation*}
{\rm (D)} \Longrightarrow 
{\rm (C)} \Longrightarrow 
{\rm (B)} \Longrightarrow 
{\rm (A)}.
\end{equation*}
Therefore, 
the condition (D), i.e., $f$ is a convex function near the origin, 
is sufficient for the analytic continuation in Theorem~1.2.
Note that 
the function in (\ref{eqn:4.1}) satisfies the condition (D).
Although the differences 
among the above four conditions are subtle, 
the converse of each implication is not true, 
which will be seen by the examples below.

\medskip
%%%%%%%%%%%%%%%%%%%%%%%%%%%%%%%%%%%%%%%
{\bf [(A) $\not\Rightarrow$ (B)] }
\quad 
For each $n\in\N$, 
we define a $C^{\infty}$ function 
$\chi_n$ on $\R$ by 
\begin{equation}\label{eqn:4.2}
\chi_n(x)=
\begin{cases}
&n \quad 
\text{ if } \left|x-\frac{1}{n}\right|<\frac{1}{8n^2},\\
&0 \quad
\text{ if } \left|x-\frac{1}{n}\right|>\frac{1}{4n^2},
\end{cases}
\end{equation}
and, otherwise, $\chi_n$ satisfies 
$0\leq \chi_n(x) \leq n$. 
We remark that 
the supports of $\chi_n$ and $\chi_m$ 
do not intersect if $n\neq m$.
Let 
\begin{equation}\label{eqn:4.3}
f(x):=e^{-g(x)} \text{ for } x \in (0,1); \;\;
f(0)=0,  
\end{equation} 
where 
\begin{equation*}
g(x)=\int_x^1 h(t) dt \quad  \text{with }
h(x)= \sum_{n=1}^{\infty} \chi_n(x)+1.
\end{equation*}
It follows from direct computations 
that the following properties hold:
\begin{itemize}
\item $f,g$ and $h$ are $C^{\infty}$-smooth on $(0,1)$;
\item $\lim_{x\to +0}g(x)=\infty$ and 
$\lim_{x\to +0}f(x)=0$, 
which implies that $f$ extends 
to a continuous function on $[0,1)$; 
\item $f'(x)=-g'(x) e^{-g(x)}=h(x) e^{-g(x)}>0$ on $(0,1)$;
%\item $\lim_{x\to+0} f'(x)=\lim_{x\to+0} h(x) e^{-g(x)}=0$, 
%which implies that $f$ can be regarded as 
%a $C^1$ function on $[0,1)$;
\item The limit of 
$f(x)/f'(x)(=-1/g'(x)=1/h(x))$ 
as $x\to+0$ does not exist. 
\end{itemize} 

We comment on the regularity of $f$ at the origin.
Since $\lim_{x\to 0} f'(x)=\infty$, 
$f$ cannot be extended as a $C^1$ function to 
$[0,\delta)$.
If ``$n$" is replaced by ``$n\log n$" 
in the definition of $\chi_n$ (\ref{eqn:4.2}), 
then $\lim_{x\to +0} f'(x)=0$, and hence 
$f$ extends to a $C^1$ function on $[0,\delta)$.

\medskip
%%%%%%%%%%%%%%%%%%%%%%%%%%%%%%%%%%%%%%%
{\bf [(B) $\not\Rightarrow$ (C)] } \quad
Let $\alpha$ be a positive constant. 
We define 
\begin{equation*}\label{eqn:4.4}
f(x)= e^{-1/|x|^\alpha}\left(1+\sin^2\frac{1}{|x|^{\alpha}}\right)
\text{ for } x \in\R\setminus\{0\}; \;\;
f(0)=0.
\end{equation*}
%%%
It follows from direct computations 
that the following properties hold:
\begin{itemize}
\item $f$ is $C^{\infty}$-smooth on $\R$;
\item $\lim_{x\to +0}f^{(k)}(x)=0$ for all $k\in\N$, 
which implies that $f$ is flat at the origin;  
\item There exists $\delta>0$ such that 
$f'(x)>0$ on $(0,\delta)$;
\item For any $n\in\N$, 
there exist $x_n, y_n\in (0,1/n)$ such that 
$f''(x_n)>0>f''(y_n)$; 
\item The limits of 
$f(x)/f'(x)$ as $x\to\pm 0$ exist. 
\end{itemize}

\medskip
%%%%%%%%%%%%%%%%%%%%%%%%%%%%%%%%%%%%
{\bf [(C) $\not\Rightarrow$ (D)] } \quad
Let $\alpha$ be a positive real number. 
We define a function $h(x)$ on $\R$ by 
\begin{equation*}\label{eqn:4.}
h(x)= e^{-1/|x|^\alpha}\cdot 
\sin^2\frac{1}{|x|^{\alpha}}
\text{ for } x \in\R\setminus\{0\}; \;\;
h(0)=0.
\end{equation*}
Moreover, we define two functions 
$g(x)$ and $f(x)$ on $\R$ by 
\begin{equation*}
g(x)=\int_0^{x} h(t)dt, 
\quad 
f(x)=\int_0^x g(t)dt.
\end{equation*}
It follows from direct computations 
that the following properties hold:
\begin{itemize}
\item $f, g$ and $h$ are $C^{\infty}$-smooth on $\R$;
\item $\lim_{x\to +0}f^{(k)}(x)=0$ for all $k\in\N$, 
which implies that $f$ is flat at the origin;  
\item There exists $\delta>0$ such that 
$f'(x)>0$ on $(0,\delta)$;
\item For any $n\in\N$, 
there exists $x_n\in (0,1/n)$ such that 
$f''(x_n)=0$. 
\end{itemize}

\begin{question}
Is the existence of 
$\lim_{x \to \pm 0} f(x)/f'(x)$
necessary for the analytic continuation 
of $Z_{\phi}(s; f)$ as stated in Theorem~1.2?
\end{question}
\noindent
In particular, 
we do not know whether or not 
${\mathcal Z}(s)$ admits this analytic continuation 
in the case of the function (\ref{eqn:4.3}).

\medskip 

{\it Acknowledgements.} 
This work was supported by JSPS KAKENHI Grant Numbers JP25K07037 and JP26K06847.

%%%%%%%%%%%%%%%%%%%%%%%%%%

\end{document}